\title{Quantitative universality for products of i.i.d. random matrices}
\author{Nikita Lvov\footnote{nikita.lvov@mail.mcgill.ca}}

\documentclass{article}

\usepackage{ifxetex,ifluatex}
\if\ifxetex T\else\ifluatex T\else F\fi\fi T%
  \usepackage{fontspec}
\else
  \usepackage[T1]{fontenc}

  \usepackage{blkarray, bigstrut}
  \usepackage[utf8]{inputenc}
  
  \usepackage{amsthm}
  \usepackage{thmtools}
  \usepackage{mathtools}

  \usepackage{lmodern}
  \usepackage{amssymb}
  \usepackage{amsfonts}
  \usepackage{tikz-cd}
  \usepackage{amsthm}
  \usepackage{xfrac}
  \usepackage{mathtools}
  \usepackage{multirow}
  \usepackage{mathrsfs}
  \usepackage{comment}

  \newtheorem*{corollary}{Corollary}

  \newcommand{\probP}{\text{I\kern-0.15em P}}
  \newcommand{\probE}{\text{I\kern-0.15em E}}

  \numberwithin{equation}{section}

  \theoremstyle{remark}
  \newtheorem{remark}{Remark}[subsection]
  \theoremstyle{remark}

  \newtheorem*{observation}{Observation}

  \newcommand{\Z}{\mathbb{Z}}

  \excludecomment{mysection}
  \excludecomment{mymysection}
  \excludecomment{maybeinclude}

\newcommand{\probM}{\mathcal{M}}
\newcommand{\probU}{\mathcal{U}}

\usepackage{mdframed}

\theoremstyle{theorem}

  \newtheorem{proposition}{Proposition}

\newtheorem{theorem}{Theorem}
\newtheorem*{extratheorem}{Theorem}

\newenvironment{ftheo*}
  {\begin{mdframed}\begin{theorem*}}
  {\end{theorem*}\end{mdframed}}

\newtheorem*{goal}{Goal}

  \usepackage{enumitem}

   \DeclareSymbolFont{bbold}{U}{bbold}{m}{n}
   \DeclareSymbolFontAlphabet{\mathbbold}{bbold}

   \theoremstyle{remark}

\fi

 \theoremstyle{definition}

 \newcommand{\comm}[1]{} 

 \usepackage{cite}
 \newcommand{\commm}[1]{}
 
 \newcommand{\chapter}{section}

  \usepackage{
  hyperref,
  cleveref,
  }
\begin{document} 
\maketitle
\abstract{Using estimates established in a previous paper, we prove quantitative universality results for cokernels of products of i.i.d. random matrices and for flags associated to $k$-tuples of random i.i.d. matrices, over finite local rings. In the case when the ring is a quotient of $\Z_p$, this gives a quantitative analogue of some previous results of Huang, Nguyen and Van Peski.}
\newcommand{\probG}{\mathcal{G}}
\newcommand{\A}{\probM}
\newcommand{\U}{\mathcal{U}}

    \newcommand{\bigosum}{\bigoplus}
   \newcommand{\osum}{\oplus}
   \newcommand{\sgn}{sgn}
   \newcommand{\Sset}{S}
   \newcommand{\PP}{\mathcal{P}}
   \newcommand{\ProbG}{\mathcal{G}}
   \newcommand{\ProbM}{\mathcal{M}}
   \newcommand{\ProbN}{\mathcal{N}}
      \newcommand{\probN}{\mathcal{N}}
   \newcommand{\ProbU}{\mathcal{U}}

   \newcommand{\ltwo}[1]{l^2\left( #1\right)}
      \newcommand{\linf}[1]{l^{\infty}\left( #1\right)}
            \newcommand{\lqnorm}[1]{l^{q}\left( #1\right)}

\section{Introduction}

In \cite{arxivtwo}, we have derived quantitative universality statements for the
distribution of cokernels, determinants and spans of i.i.d. random
matrices over a general local finite ring $R$. In this note, we show how the results of \cite{arxivtwo} also imply quantitative universality statements for the distributions of:

\begin{itemize}
  \item[(A)]   The cokernel of a product of i.i.d. random matrices,
  \item[(B)]   The determinant of a product of i.i.d. random matrices,
  \item[(C)]   The flag of modules associated to a $k$-tuple of i.i.d. random matrices. 
\end{itemize}

These arguments reprove, by a different method, some previous results of Roger
Van Peski, Hoi Nguyen and Yifeng Huang: \cite{universalityproducts}\cite{rescaledmomentmethod}\cite{flaguniversality} for $p$-adic random matrices. 
\subsection{Previous work on products of random matrices and universality.}
Van Peski has a very interesting body of work related
to products of $p$-adic random matrices: \cite{Van_Peski_2021}\cite{hlpolynomialsboundariesprm}\cite{poissonsea}\cite{locallimits}\cite{padicbrownianmotion}. Nguyen and Van Peski proved a
universality result for cokernels of products of a fixed number of matrices in \cite{universalityproducts}, using the moment method. In \cite{rescaledmomentmethod} the same authors also proved a universality result for cokernels of products of a growing number of matrices, using a renormalized moment method. Huang determined the distribution of flags associated to products of square and rectangular matrices in \cite{yifengflags}. Most recently, Huang, Nguyen and Van Peski proved universality for flags of cokernels associated to $k$-tuples of square random matrices in \cite{flaguniversality}.  

\subsection{Outline} We give a brief outline of this note.
First of all, we can give an equivalent reformulation of \cite[Theorem 1.1 (C)]{arxivtwo}, as the inequality (\ref{eqn: bound from arxivtwo}). This inequality can be expressed symbolically in the following form: $
\mathcal{M} \mathcal{G} \sim \mathcal{U}
$, where $\sim$ means that the total variation distance between the two probability distributions on matrices is bounded by $O(e^{-cN})$, for some fixed value of $c$. This implies that: $
\mathcal{M}_1 \mathcal{M}_2 \mathcal{G} \sim
\mathcal{M}_1 \mathcal{U}_2 =
\mathcal{M}_1 \mathcal{G} \mathcal {U}_2 \sim
\mathcal{U}_1 \mathcal{U}_2
$, and more generally,  $\mathcal{M}_1 \mathcal{M}_2 \cdots \probM_k \mathcal{G} 
 \sim
\mathcal{U}_1 \mathcal{U}_2 \cdots \probU_k$. This allows us to deduce the following theorem:

\begin{extratheorem}[\autoref{prop: mainargument} in \S \ref{sec: mainargument}] For each $i$, suppose that $\mathcal{M}_i$ is an i.i.d. random matrix, with values in a finite local ring $R$. Assume that the support of the distribution of the entries of $\A_i$ 
\begin{itemize}
\item[(a)] contains two elements that differ by a unit,
\item[(b)] is not contained in the affine translate of a proper subring of $R$.
\end{itemize}

Then: 
\[
\A_1 \A_2 ... \A_k \probG   \sim   \mathcal{U}_1 \mathcal{U}_2 ... \mathcal{U}_k
\]

\end{extratheorem}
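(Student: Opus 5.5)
We argue by induction on $k$, using only the base estimate (\ref{eqn: bound from arxivtwo}), i.e.\ $\A\probG \sim \probU$, together with two elementary facts about total variation distance. Here $\probG$ denotes a Haar-random element of $GL_N(R)$ and $\probU_i$ a uniformly random $N\times N$ matrix over $R$, all independent. The two facts are:
\begin{itemize}
\item[(i)] If $X\sim Y$ in total variation and $Z$ is independent of both, then $d_{TV}(ZX, ZY)\le d_{TV}(X,Y)$. Indeed, multiplying on the left by an independent matrix is a Markov kernel applied to the law of $X$, and such kernels do not increase total variation.
\item[(ii)] For any random matrix $A$ independent of $\probG$ and of $\probU$, we have the exact equality in law $A\probU \stackrel{d}{=} A\probG\probU$. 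This holds because the uniform measure on matrices is invariant under left multiplication by any fixed invertible matrix, so $\probG\probU\stackrel{d}{=}\probU$.
\end{itemize}

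For the inductive step, assume that
\[
d_{TV}\bigl(\A_2\cdots\A_k\probG,\ \probU_2\cdots\probU_k\bigr)\le C_{k-1}e^{-cN},
\]
where all the matrices are independent. Since $\A_1$ is independent of everything on both sides, fact (i) gives
\[
\A_1\A_2\cdots\A_k\probG \sim \A_1\probU_2\cdots\probU_k
\]
with the same error. Now write $\probU_2\cdots\probU_k = \probU_2 W$, with $W=\probU_3\cdots\probU_k$ independent of the rest. Fact (ii) gives $\A_1\probU_2\stackrel{d}{=}\A_1\probG\probU_2$. The base estimate gives $\A_1\probG\sim\probU_1$ with error $Ce^{-cN}$, and applying fact (i) on the right, with $\probU_2W$ independent, gives
\[
\A_1\probG\probU_2W \sim \probU_1\probU_2 W,
\]
again with error $Ce^{-cN}$. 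The triangle inequality then yields $C_k\le C_{k-1}+C$, so $C_k\le kC$. The base case $k=1$ is exactly (\ref{eqn: bound from arxivtwo}), which applies to each $\A_i$ because it satisfies hypotheses (a) and (b).

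The main point requiring care is stating the base estimate uniformly. The constants $c,C$ must be chosen so that they work simultaneously for all $\A_i$. This is immediate if the entry distributions range over finitely many laws, or if the constants from \cite{arxivtwo} depend only on $R$ and on quantitative versions of (a) and (b). We take $c=\min_i c_i$ and $C=\max_i C_i$.

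The only other thing to check is the direction of the Haar factor in fact (ii). We need $\probG\probU\stackrel{d}{=}\probU$ when $\probG$ multiplies on the left of the uniform matrix, and this is the invariance we used. Everything else is formal manipulation of total variation distance.
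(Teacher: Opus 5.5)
Your proposal is correct and follows essentially the same argument as the paper. The paper replaces $\probM_k\probG$ by $\probU_k$ using the base estimate and the contraction of total variation under independent multiplication (its Proposition 1), reinserts a fresh Haar factor via $\probG\probU_k \stackrel{d}{=} \probU_k$, and iterates, so the errors add up to $k$ times the base bound. Your induction on $k$ is a formal packaging of that iteration (unwinding it gives exactly the paper's chain), and your remark on choosing $c,C$ uniformly over the $\probM_i$ is a small extra precision that the paper sidesteps by using a single entry law $\xi$.
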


\begin{corollary} $coker( \A_1 \A_2 ... \A_k )   \sim  coker( \U_1 \U_2 ... \U_k )$

\end{corollary}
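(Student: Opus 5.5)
The corollary should follow from the Theorem (\autoref{prop: mainargument}) by pushing total variation distance forward along a measurable map. The key facts are that $\probG$ is Haar-distributed on $GL_N(R)$, independent of the $\A_i$, and that multiplying on the right by an invertible matrix does not change the cokernel up to isomorphism.

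First, for any $N\times N$ matrix $X$ over $R$ and any $g\in GL_N(R)$, right multiplication by $g$ is a bijection $R^N\to R^N$. Hence the image of $Xg$ equals the image of $X$, and
\[
coker(Xg)=R^N/XgR^N = R^N/XR^N = coker(X).
\]
Applying this with $X=\A_1\A_2\cdots\A_k$ and $g=\probG$ shows that $coker(\A_1\cdots\A_k\probG)$ and $coker(\A_1\cdots\A_k)$ have the same distribution on isomorphism classes of finite $R$-modules. This holds pointwise, so it does not use independence.

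Second, I use that total variation distance does not increase under pushforward. For random variables $X,Y$ and a map $f$,
\[
d_{TV}(f(X),f(Y))\le d_{TV}(X,Y),
\]
since for any set $A$ of values of $f$ we have $|\probP(f(X)\in A)-\probP(f(Y)\in A)|=|\probP(X\in f^{-1}A)-\probP(Y\in f^{-1}A)|$. Take $f(X)=$ the isomorphism class of $coker(X)$. The Theorem gives $d_{TV}(\A_1\cdots\A_k\probG,\ \U_1\cdots\U_k)=O(e^{-cN})$. Combining this with the first step,
\[
d_{TV}\bigl(coker(\A_1\cdots\A_k),\ coker(\U_1\cdots\U_k)\bigr)=d_{TV}\bigl(coker(\A_1\cdots\A_k\probG),\ coker(\U_1\cdots\U_k)\bigr)=O(e^{-cN}),
\]
with the same constant $c$. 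That is exactly the statement $\sim$.

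There is no real obstacle here; all the work is in the Theorem. The one point to check is that $\probG$ really is a random invertible matrix, rather than some other distribution, in the reformulation (\ref{eqn: bound from arxivtwo}). If $\probG$ instead multiplied on the left, I would use $coker(gX)\cong coker(X)$ via the isomorphism $R^N\to R^N$ induced by $g$, which sends $XR^N$ onto $gXR^N$. Either way the cokernel class is preserved.
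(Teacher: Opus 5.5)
Your proof is correct and follows the paper's route: the paper gets this from the Corollary after \autoref{prop: mainargument}, observing that the cokernel is invariant under right multiplication by the group in which $\probG$ lives and that total variation distance cannot increase under pushforward. Your pointwise argument works for any $g\in GL_N(R)$, so it covers whichever subgroup ($E_N$, $SL_N$ or $GL_N$) is used for $\probG$.
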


\begin{remark} $GL_N(R)$ can be replaced with $SL_N(R)$, and more generally by $E_N$, the subgroup generated by matrices that have $1$'s on the diagonal and at most one other non-zero entry\footnote{The notation $E_N$ is from mathoverflow.net/questions/59884/ and the references therein.}.
\end{remark}

We will slightly refine the above argument in \S \ref{sec: secondmainargument} to also deduce universality
for flags of cokernels
\[
coker(M_1) \leftarrow coker(M_1 M_2) \leftarrow coker(M_1 M_2 M_3) \leftarrow \cdots
\]
In particular we get universality for the sequence of cokernels:
\[
coker(M_1)
\, , \,
coker(M_1 M_2)
\, , \,
coker(M_1 M_2 M_3)
\, , \,
\cdots
\]
The most general statement in this vein is \autoref{prop: secondmainargument}. Finally, we will explain that the above results can be naturally generalized to
$k$-tuples of rectangular matrices, and to matrices over finite local rings, other than $\Z / p^a \Z$.

\subsection{Acknowledgements}

The author would like to thank Elia Gorokhovsky, Lily Levitsky, Hoi Nguyen, Roger Van Peski and Melanie Wood for helpful discussions related to the subject of this paper. The author would also like to particularly thank Hoi Nguyen for organizing a very fruitful meeting in May 2026, that inspired the ideas in this paper.
\newline
\newline
\noindent
AI was not used in the course of this project.

%
   
 \section{Universality results for products of i.i.d. random matrices}

\subsection{Restatement of \cite[Theorem 1.1 (C)]{arxivtwo}}
\label{sec: statement of main estimate}

Let $\probM_{m,n}$ be an $m \times n$ i.i.d. random matrix over a
finite local ring $R$. Let $\epsilon>0$ and let $\xi$ be the
distribution of the entries of $\probM_{m,n}$. Let $\mathfrak{m}$ be
the maximal ideal of $R$ and let $p$ be the characteristic of the
residue field, $R / \mathfrak{m}$. 
\newline
\newline
\noindent
\renewcommand{\probG}{\mathcal{G}}
Let $\probU_{m,n}$ an $m \times n$ random matrix over $R$ whose
entries are independent and uniformly random. Let $\probG_{n,n}$
denote a Haar random element in $E_n(R) \subset SL_n(R)$, the
subgroup of $SL_n(R)$ that is generated by transvections. Let
$d_{TV}$ denote the total variation distance between probability
measures.
\newline
\newline
Finally, suppose that $\xi$ is not concentrated on an affine translate of a proper subring of $R$.
\newline
\newline
\noindent
Then, we have the following bound on the total variation distance
between two matrix distributions, for any $\epsilon$:

\begin{equation}
\label{eqn: bound from arxivtwo}
d_{TV}
\Big(
\,(\probM_{m,n} \probG_{n,n})
\,,\,
\probU_{m,n}
\,\Big)<
\end{equation}
\[
K(R,\xi,\epsilon)
e^{\epsilon (m+n)} 
(\#R)^{\max(m-n,0)}
\left[
\ltwo{\xi \mod \mathfrak{m}}^m
+
\linf{\xi \mod \mathfrak{m}}^n
+
\frac{1}{p^m}
\right]
\]
\begin{remark}
By $\lqnorm{\xi \mod \mathfrak{m}}$, we mean the $l^q$ norm of the distribution of the random variable $(\xi \mod \mathfrak{m})$. It follows from the definition that $\lqnorm{\xi \mod \mathfrak{m}}\leq 1$ with equality if and only if the distribution of $\xi$ is concentrated on a single value modulo $ \mathfrak{m}$. Furthermore, $\linf{\xi \mod \mathfrak{m}}\leq \ltwo{\xi \mod \mathfrak{m}}$. 
\end{remark}
\begin{remark}
In the above definition of $\probG_{n,n}$, we can evidently replace $E_n(R)$ by any larger subgroup of
$GL_n(R)$. For  example, we can replace it by
$SL_n(R)$ or $GL_n(R)$.
\end{remark}
\subsubsection{Special cases}
Firstly, for the special case $R \cong \mathbb{Z} / p^a \mathbb{Z}$, we have:
\begin{equation}
\label{eqn: Main Theorem Part 2}
d_{TV}
\Big(
\,(\probM_{m,n} \probG_{n,n})
\,,\,
\probU_{m,n}
\,\Big)<
\end{equation}
\[
K(a, \xi, \epsilon) 
e^{\epsilon (m+n)} 
p^{\max(a(m-n),0)}
\left[
\ltwo{\xi \mod p}^m
+
\linf{\xi \mod p}^n
\right]
\]

Secondly, for the special case when $R \cong \mathbb{Z} / p^a
 \mathbb{Z}$ and when the random matrix is square, we have
\begin{equation}
\label{eqn: dtvsquare}
d_{TV}
\Big(
\,(\probM_{N,N} \probG_{N,N})
\,,\,
\probU_{N,N}
\,\Big)
<
C (a, \xi, \epsilon)
e^{\epsilon N}
\left[
\ltwo{\xi \mod p}
\right]^N
\end{equation}

 \subsubsection{Estimate of $K(\xi,R,\epsilon)$}
 \label{sec: kestimate}
The magnitude of $K(\xi,R,\epsilon)$ will not matter very much for the sequel. However, to give a rough bound on the size, we give a non-optimal equality for $K(\xi,R,\epsilon)$:

\[
K(\xi,R,\epsilon) \leq
\]
\begin{equation}
\label{eqn: kconstant bound}
\leq \left( \#\{\text{additive subgroups of $R$}\}\right)^2
(\#R)^T \max_{j>0}
\left(
\frac{j^T+1}
{ (1+\epsilon)^j},
\right)
\end{equation}
where $T$ is defined as the product of $log_p(\#R)$ and
\[
\left\lceil \frac{log(\epsilon/\#R)}{log(\text{largest absolute value of Fourier coefficients of $\xi$, not equal to $1$})} \right\rceil.
\]
We clarify the meaning of the denominator in the preceding expression. The random variable $\xi$ is an $R$-valued random variable, i.e. as a random variable valued in a finite abelian group. Therefore, it has $\#R$ Fourier coefficients, some of which may have absolute value $1$. We are interested in a Fourier coefficient with the largest absolute value, aside from those that have absolute value $1$.

\subsubsection{Estimate of $C(\xi,a,\epsilon)$}

From \S  \ref{sec: kestimate}, we can also deduce the following non-optimal bound on $C(\xi,a,\epsilon)$.
\begin{equation}
\label{eqn: cconstant bound}
C(\xi,a,\epsilon)\leq
2(a+1)^2
p^{a^2T} \max_{j>0}
\left(
\frac{j^{aT}+1}
{ (1+(\epsilon/2))^j},
\right)
\end{equation}
where $T\in \mathbb{N}$ is a natural number such that for all $m$ satisfying $p^a\nmid m$, the following inequality holds:
\[
\left| \probE\left[ exp\left(2 \pi  i\frac{ m}{p^a}\xi \right) \right]^{}\right|^{T}  \leq \frac{\epsilon}{2p^a}
\]


\newcommand{\dtv}[2]{d_{TV}\Big(#1, #2  \Big)}
\subsection{First consequence}
\label{sec: mainargument}
First of all, we will use the following proposition, that we give without proof. The notation used below is explained at the beginning of \S \ref{sec: statement of main estimate}.
\begin{proposition}
\label{prop: tv distance}
If $ \probN_1 , \probN_2 , \probN_3 $ are independent random matrices, then:
\[
\dtv {(\probN_3 \probN_1) }{(\probN_3 \probN_2) }
\leq
d_{TV}( \probN_1 , \probN_2 ) \geq \dtv {(\probN_1 \probN_3 ) }{( \probN_2 \probN_3) }
\] 
\end{proposition}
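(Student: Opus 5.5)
Note first that the displayed statement contains a typo: the second relation should read $\leq$, not $\geq$. The intended claim is that both $d_{TV}(\probN_3\probN_1,\probN_3\probN_2)$ and $d_{TV}(\probN_1\probN_3,\probN_2\probN_3)$ are bounded above by $d_{TV}(\probN_1,\probN_2)$. I will prove this form.

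The plan is to read the claim as a data-processing inequality for total variation distance under a common random map applied independently. I will use the variational characterization
\[
d_{TV}(\mu,\nu)=\sup_{A}\,|\mu(A)-\nu(A)|,
\]
where $A$ ranges over subsets of the finite set of matrices over $R$ of the relevant size. Everything is finite, so all sums are finite and no measurability issues arise.

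Since $\probN_3$ is independent of $\probN_1$, I will condition on its value. Write $\lambda$ for the law of $\probN_3$ and $\mu_i$ for the law of $\probN_i$. Fix a set $A$ of matrices, and for each matrix $X$ let
\[
A_X=\{Y : XY\in A\}.
\]
Then
\[
\probP(\probN_3\probN_1\in A)=\sum_X \lambda(X)\,\mu_1(A_X),
\]
and the same identity holds with $\probN_2$ in place of $\probN_1$, using independence of $\probN_3$ from $\probN_2$.

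Subtracting the two identities and applying the triangle inequality gives
\[
|\probP(\probN_3\probN_1\in A)-\probP(\probN_3\probN_2\in A)|\le \sum_X \lambda(X)\,|\mu_1(A_X)-\mu_2(A_X)|.
\]
Each term $|\mu_1(A_X)-\mu_2(A_X)|$ is at most $d_{TV}(\probN_1,\probN_2)$, and the weights $\lambda(X)$ sum to $1$. So the right-hand side is at most $d_{TV}(\probN_1,\probN_2)$. Taking the supremum over $A$ yields the first inequality.

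The second inequality follows by the identical argument with $A_X=\{Y: YX\in A\}$, i.e. right multiplication by the fixed matrix $X$. Only the pairwise independence of $\probN_3$ from $\probN_1$ and from $\probN_2$ is needed, not any joint coupling of $\probN_1$ with $\probN_2$.

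There is no serious obstacle. The only point requiring care is that the supremum defining total variation distance is taken inside the conditional expectation, and not after averaging the two distributions. This ordering is exactly what the conditioning argument provides.

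An alternative route is coupling. Take an optimal coupling of $(\probN_1,\probN_2)$ with $\probP(\probN_1\ne\probN_2)=d_{TV}$, independent of $\probN_3$. Then $\probN_3\probN_1\ne\probN_3\probN_2$ forces $\probN_1\ne\probN_2$, which gives the same bound.
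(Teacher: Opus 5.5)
Your proof is correct and complete. The paper states this proposition without proof, so there is no argument of its own to compare with. Your conditioning computation is the standard data-processing inequality for total variation. The coupling variant is an equally valid alternative, and neither argument depends on whether $d_{TV}$ is normalized with the factor $1/2$.

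One correction to your opening remark: the display is not a typo. The chain $a\le b\ge c$ already means $a\le b$ and $c\le b$, which is exactly what you prove. Replacing $\ge$ by $\le$ would instead assert $d_{TV}(\mathcal{N}_1,\mathcal{N}_2)\le d_{TV}(\mathcal{N}_1\mathcal{N}_3,\mathcal{N}_2\mathcal{N}_3)$, which fails in general (take $\mathcal{N}_3=0$).

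Your argument also goes through verbatim with $A_X=\{Y : F(X,Y)\in A\}$, for an arbitrary function $F$ and any random element $Z$ independent of $\mathcal{N}_1$ and of $\mathcal{N}_2$. This gives $d_{TV}(F(Z,\mathcal{N}_1),F(Z,\mathcal{N}_2))\le d_{TV}(\mathcal{N}_1,\mathcal{N}_2)$. That is precisely the ``variation'' of the proposition which the paper invokes, again without proof, in the proof of Theorem~\ref{prop: secondmainargument}, with $Z$ the collection of the remaining matrices $\mathcal{M}_i,\mathcal{G}_i$.
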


The main result of this subsection is the following theoirem.
\begin{theorem}
\label{prop: mainargument}
\begin{equation}
\dtv{
(
\probM_1 \probM_2 \cdots \probM_k \probG
)
}{
(
\probU_1 \probU_2 \cdots \probU_k
)
}<
\end{equation}
\[
<k
C(a,\xi, \epsilon)
e^{\epsilon N}
\ltwo{ \xi \mod  p}^N\]
\end{theorem}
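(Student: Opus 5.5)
The plan is a telescoping (hybrid) argument. All matrices are $N\times N$ over $\Z/p^a\Z$, and the $\probM_i$, $\probU_i$, $\probG$ are mutually independent. I introduce the intermediate distributions
\[
H_j = \probU_1 \cdots \probU_j\, \probM_{j+1} \cdots \probM_k\, \probG, \qquad 0\le j\le k,
\]
so that $H_0=\probM_1\cdots\probM_k\probG$. The endpoint needs a small check: $H_k=\probU_1\cdots\probU_k\probG$, and this equals $\probU_1\cdots\probU_k$ in distribution. Indeed, right multiplication by a fixed element of $E_N\subset GL_N$ is a bijection of matrix space and preserves the uniform measure, so $\probU_k\probG\overset{d}{=}\probU_k$ when $\probG$ is independent of $\probU_k$. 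By the triangle inequality it then suffices to show
\[
d_{TV}(H_{j-1},H_j)\le C(a,\xi,\epsilon)e^{\epsilon N}\ltwo{\xi \mod p}^N \quad\text{for each } 1\le j\le k,
\]
and summing the $k$ terms gives the factor $k$.

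To compare $H_{j-1}$ and $H_j$, I insert extra independent Haar elements. The key identity is that for independent Haar $\probG',\probG$ in $E_N$ and any independent $\probN$, one has $\probG'\probN\probG\overset{d}{=}\probG'\,(\probG'^{-1}\probN\probG')\,\probG$ in the appropriate sense. More simply, $\probM_{j+1}\cdots\probM_k\probG$ can be rewritten using a fresh $\probG'$ with
\[
\probM_{j+1}\cdots\probM_k\probG \;\overset{d}{=}\; \probG'^{-1}\,\probG'\,\probM_{j+1}\cdots\probM_k\probG.
\]
The cleaner route avoids conjugation entirely: I only need a Haar factor immediately to the right of $\probM_j$. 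So I write
\[
H_{j-1}=\probU_1\cdots\probU_{j-1}\,(\probM_j\probG')\,\bigl(\probG'^{-1}\probM_{j+1}\cdots\probM_k\probG\bigr).
\]
This requires the distribution of $\probG'^{-1}\probM_{j+1}\cdots\probM_k\probG$ jointly with $\probG'$ to be manageable. Here is the point: $\probG'^{-1}X\probG$, for any $X$ independent of both, has the same distribution as $X\probG''$ jointly with $\probG'$. Since $\probG$ is Haar and independent, $\probG'^{-1}X\probG\overset{d}{=}\probG'^{-1}X\probG\cdot$ (absorb), but that alone is not enough.

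To avoid this subtlety, I would instead first replace $\probG$ by $\probG_1\probG_2\cdots$ only where needed, and argue inductively from the right. Let $A=\probM_{j+1}\cdots\probM_k\probG$. Step one is to reduce the induction hypothesis to $A\overset{\approx}{}\,\probU_{j+1}\cdots\probU_k$, which after the earlier swaps is literally $\probU_{j+1}\cdots\probU_k$. Step two is to observe that the $\probU$-tail is invariant under left multiplication by an independent Haar element: $\probG'\probU_{j+1}\overset{d}{=}\probU_{j+1}$. Hence $H_j$ can be written with a free $\probG'$ adjacent to $\probM_{j}$ on the right. Concretely, I order the hybrids as swaps from the right end, i.e.\ define
\[
H'_j=\probM_1\cdots\probM_{k-j}\,\probU_{k-j+1}\cdots\probU_k .
\]
For $j\ge1$ we have $\probU_{k-j+1}\overset{d}{=}\probG\probU_{k-j+1}$, so $H'_j\overset{d}{=}\probM_1\cdots\probM_{k-j-1}\,(\probM_{k-j}\probG)\,\probU_{k-j+1}\cdots$. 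Likewise $H'_{j+1}=\probM_1\cdots\probM_{k-j-1}\,\probU_{k-j}\,\probU_{k-j+1}\cdots$. Proposition \ref{prop: tv distance}, applied twice (left factor $\probN_3=\probM_1\cdots\probM_{k-j-1}$ and right factor $\probU_{k-j+1}\cdots\probU_k$, all independent), bounds $d_{TV}(H'_j,H'_{j+1})$ by $d_{TV}(\probM_{k-j}\probG,\probU_{k-j})$. That quantity is at most $C(a,\xi,\epsilon)e^{\epsilon N}\ltwo{\xi\mod p}^N$ by (\ref{eqn: dtvsquare}). The base step $j=0$ compares $H'_0\probG=\probM_1\cdots\probM_k\probG$ with $H'_1$ directly via (\ref{eqn: dtvsquare}) with left factor $\probM_1\cdots\probM_{k-1}$.

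The main obstacle is this bookkeeping: a fresh Haar factor must sit immediately to the right of each $\probM_i$ being swapped, and the left-$GL$-invariance of the uniform matrices supplies it for every swap except the rightmost. That one uses the given $\probG$. I also need the $\probM_i$ to share the same entry distribution $\xi$ (or to take the maximum of the constants), so that each of the $k$ swaps costs the same bound.
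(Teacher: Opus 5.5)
Your final right-to-left hybrid $H'_0\probG, H'_1,\dots,H'_k$ is correct and is essentially the paper's argument. You insert a fresh Haar factor next to $\probM_{k-j}$ using $\probG\probU_{k-j+1}\overset{d}{=}\probU_{k-j+1}$, then apply \autoref{prop: tv distance} on both sides together with (\ref{eqn: dtvsquare}), and sum the $k$ bounds; the paper does the same, and your earlier left-to-right hybrid and the conjugation detour should simply be deleted (the claim that $\probG'^{-1}X\probG$ has the law of $X\probG''$ jointly with $\probG'$ is false in general, but your final proof never uses it).
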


\begin{proof}
Given two random matrices, we will write $\mathcal{N}_1 \sim \mathcal{N}_2$ if $d_{TV} (\mathcal{N}_1 , \mathcal{N}_2)$ is bounded above by the right hand side of (\ref{eqn: dtvsquare}). By the inequality (\ref{eqn: dtvsquare}), we have $\probM_k \probG \sim   \probU_k$, hence by \autoref{prop: tv distance}, 
\[
\probM_1 \probM_2 \cdots \probM_k \probG \sim \probM_1 \probM_2 \cdots \probM_{k-1 } \probU_k
\]
By the invariance of the Haar measure, the random matrix on the right has the same distribution as:
\[
 \probM_1 \probM_2 \cdots \probM_{k-1 } \probG \probU_k \sim  \probM_1 \probM_2 \cdots \probM_{k-2 } \probU_{k-1} \probU_k 
\]
where we have again used the inequality (\ref{eqn: dtvsquare}) and  \autoref{prop: tv distance}. By iterating this argument, we get \autoref{prop: mainargument}.

\end{proof}

\begin{corollary}
Let $f$ be any function on $N \times N$ matrices that is right
invariant by the action of $SL_N(
\mathbb{Z}/p^a \mathbb{Z}
)$. Then:
\[
\dtv
{
f(\probM_1 \probM_2 \cdots \probM_k)
}
{
f(\probU_1  \probU_2  \cdots  \probU_k)
}<k
C(a,\xi, \epsilon)
e^{\epsilon N}
\ltwo{ \xi \mod  p}^N
\]
\end{corollary}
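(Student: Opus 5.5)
The plan is to deduce the corollary directly from \autoref{prop: mainargument}. The only additional ingredient is that applying a deterministic function to two random matrices cannot increase their total variation distance.

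\textbf{Step 1: insert the Haar factor.} Let $\probG$ be Haar random in $E_N(\mathbb{Z}/p^a\mathbb{Z})$, independent of the $\probM_i$. Since $E_N \subset SL_N(\mathbb{Z}/p^a\mathbb{Z})$ and $f$ is right invariant under $SL_N$, we have
\[
f(\probM_1 \cdots \probM_k \probG) = f(\probM_1 \cdots \probM_k)
\]
pointwise, for every realization of $\probG$. Hence the two random variables $f(\probM_1\cdots\probM_k)$ and $f(\probM_1\cdots\probM_k\probG)$ have the same distribution. (If one preferred $\probG$ Haar in $SL_N$ or $GL_N$, the remark after (\ref{eqn: bound from arxivtwo}) allows that, but $E_N$ already suffices here.)

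\textbf{Step 2: contraction of total variation under pushforward.} For any random variables $X,Y$ with values in a finite set and any function $f$,
\[
d_{TV}(f(X),f(Y)) = \sup_B \left|\probP(X\in f^{-1}(B)) - \probP(Y\in f^{-1}(B))\right| \le d_{TV}(X,Y).
\]
This holds because the sets $f^{-1}(B)$ form a subfamily of all events. Apply this with $X=\probM_1\cdots\probM_k\probG$ and $Y=\probU_1\cdots\probU_k$. Combined with Step 1, the left side of the corollary equals $d_{TV}(f(X),f(Y))$, which is at most $d_{TV}(X,Y)$.

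\textbf{Step 3: invoke the theorem.} \autoref{prop: mainargument} bounds $d_{TV}(X,Y)$ by $kC(a,\xi,\epsilon)e^{\epsilon N}\ltwo{\xi \mod p}^N$, which gives the corollary. No step is a real obstacle. The one point to state carefully is Step 1: it uses that $f$ is invariant under the full group in which $\probG$ takes values, and this is precisely why the hypothesis is right invariance by $SL_N$, which contains $E_N$. No invariance is needed on the $\probU$ side, since $\probG$ appears only on the $\probM$ side of the theorem.
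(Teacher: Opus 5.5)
Your proposal is correct and is the argument the paper leaves implicit when it states this corollary without proof. You insert the Haar factor $\probG$, drop it pointwise using right $SL_N$-invariance of $f$, and then combine the contraction of total variation under pushforward with \autoref{prop: mainargument}. One small caution: your aside about taking $\probG$ Haar in $GL_N$ would not combine with your Step 1, since $f$ is only assumed $SL_N$-invariant. You never use that option, though, and you correctly flag the requirement yourself.
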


\begin{remark}
In the above corollary, we can take $f$ to be the cokernel, the span
or the determinant. Hence, the corollary gives a quantitative
universality result for the cokernel, the span and the determinant of
products of i.i.d. random matrices.
\end{remark}



\subsection{Second consequence - a refinement of the previous estimate}
\label{sec: secondmainargument}
Again, the notation used below is explained at the beginning of \S \ref{sec: statement of main estimate}.
\begin{theorem}
\label{prop: secondmainargument}
Let
\begin{itemize}
\item$\{\probG_i\}$ be $k$ independent copies of a Haar random element
of $SL_N(\Z / p^a \Z)$, 
\item $\{ \probU_i \}$ be $k$ independent copies a Haar random element
of $Mat_{n,n} (\Z / p^a \Z)$,
\item $\{ \probM_i \}$ be $k$ independent i.i.d. random matrices with
entries in $\Z / p^a \Z$.
\end{itemize}
\[
d_{TV}\Big(
\left[        \,                           \probM_1   \probG_1\,,\,
 \probG_1^{-1}                \probM_2  \probG_2\,,\,
 \probG_2^{-1}               \probM_3  \probG_3\,,\,
 \cdots     \,  ,\,
 \probG_{k-1}^{-1}         \probM_k \probG_k\,
 \right]
,
\left[
\probU_1,
\probU_2,
\probU_3,
\cdots       ,
\probU_k
\right]
\Big)
\]
\begin{equation}
<k
C( \xi, a, \epsilon)
e^{\epsilon N}
\ltwo{ \xi \mod p}^N
\end{equation}
\end{theorem}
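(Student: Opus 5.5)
We proceed by a hybrid (telescoping) argument over the $k$ coordinates, replacing one factor at a time from the right. For $0\le j\le k$ let $X^{(j)}$ denote the $k$-tuple whose first $k-j$ entries are $\probM_1\probG_1,\ \probG_1^{-1}\probM_2\probG_2,\dots,\probG_{k-j-1}^{-1}\probM_{k-j}\probG_{k-j}$ and whose last $j$ entries are independent uniform matrices $\probU_{k-j+1},\dots,\probU_k$, independent of everything else. Then $X^{(0)}$ is the left tuple in the statement and $X^{(k)}$ is the right one. By the triangle inequality it suffices to show $d_{TV}(X^{(j)},X^{(j+1)})\le C(a,\xi,\epsilon)e^{\epsilon N}\ltwo{\xi \mod p}^N$ for each $j$. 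Summing these $k$ bounds then gives the factor $k$.

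To compare $X^{(j)}$ with $X^{(j+1)}$, set $i=k-j$. The two tuples differ only in that the $i$-th entry $\probG_{i-1}^{-1}\probM_i\probG_i$ (with $\probG_0:=I$) is replaced by a fresh $\probU_i$. Condition on $\probM_1,\dots,\probM_{i-1}$ and $\probG_1,\dots,\probG_{i-1}$. Note that $\probG_i$ appears nowhere else in $X^{(j)}$, because the later entries are already uniform and independent. Therefore, conditionally, the $i$-th entry is $A\,\probM_i\probG_i$ with $A=\probG_{i-1}^{-1}$ a fixed invertible matrix, and all other entries are either fixed or independent of it.

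By (\ref{eqn: dtvsquare}), $d_{TV}(\probM_i\probG_i,\probU_i)$ is bounded by the desired quantity. Multiplying on the left by the fixed invertible $A$ is a bijection, so it preserves total variation distance. It also maps $\probU_i$ to a uniform matrix, by invariance of the uniform measure on $Mat_{N,N}$ under left multiplication by invertible matrices. This is the same invariance used in \autoref{prop: mainargument}.

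Hence, conditionally, $d_{TV}$ between the $i$-th entry and $\probU_i$ is bounded as required. The remaining coordinates are independent of this entry given the conditioning, so the total variation distance of the whole tuples is at most this bound; this is the product-measure version of \autoref{prop: tv distance}. Averaging over the conditioning, using convexity of $d_{TV}$ under mixtures, gives the bound unconditionally.

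The main point requiring care is the independence bookkeeping. The common factor $\probG_i$ couples the $i$-th and $(i+1)$-th entries, which is exactly why we replace entries from the right. Once entry $i+1$ onward has been made uniform and independent, $\probG_i$ occurs only in entry $i$, so the single-matrix estimate (\ref{eqn: dtvsquare}) applies cleanly. We also note that $SL_N$ versus $GL_N$ is irrelevant here, by the remark following (\ref{eqn: bound from arxivtwo}).
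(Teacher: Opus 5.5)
Your proposal is correct and follows essentially the same route as the paper: you replace entries one at a time from the right, use (\ref{eqn: dtvsquare}) together with a product/conditioning form of \autoref{prop: tv distance}, and absorb $\probG_{i-1}^{-1}$ into the uniform matrix by invariance. Your explicit conditioning on $\probG_1,\dots,\probG_{i-1}$ and your triangle-inequality bookkeeping make precise what the paper calls ``a variation of \autoref{prop: tv distance}'' and ``iterating this argument.''
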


\begin{proof}
The proof of \autoref{prop: secondmainargument} is similar to the proof of \autoref{prop: mainargument}. By (\ref{eqn: dtvsquare}) and  a variation of \autoref{prop: tv distance}, we see that 
\[
\left[        \,                           \probM_1   \probG_1\,,\,
 \probG_1^{-1}                \probM_2  \probG_2\,,\,
 \probG_2^{-1}               \probM_3  \probG_3\,,\,
 \cdots     \,  ,\,
 \probG_{k-1}^{-1}         \probM_k \probG_k\,
 \right] \sim 
\]
\[
\left[        \,                           \probM_1   \probG_1\,,\,
 \probG_1^{-1}                \probM_2  \probG_2\,,\,
 \probG_2^{-1}               \probM_3  \probG_3\,,\,
 \cdots     \,  ,\,
  \probG_{k-2}^{-1}               \probM_{k-1}  \probG_{k-1}\,,\,
  \probG_{k-1}^{-1}    \probU_k\,
 \right]
\]
which, by the invariance of the Haar measure, has the same distribution as:
\[
\left[        \,                           \probM_1   \probG_1\,,\,
 \probG_1^{-1}                \probM_2  \probG_2\,,\,
 \probG_2^{-1}               \probM_3  \probG_3\,,\,
 \cdots     \,  ,\,
  \probG_{k-2}^{-1}               \probM_{k-1}  \probG_{k-1}\,,\,
    \probU_k\,
 \right]
\]
By iterating this argument, we get \autoref{prop: secondmainargument}
\end{proof}

\begin{corollary}
\label{cor: Nested Corollary}

Let $f$ be any function on $n \times n$ matrices that is invariant by
the    right action of $SL_n(\Z / p^a \Z )$. Then the total variation distance between the  distribution of

   \[
   \left[
   f(\probM_1),
   f(\probM_1 \probM_2),
   f(\probM_1 \probM_2 \probM_3),
   \cdots,
   f(\probM_1 \probM_2 \probM_3 \cdots \probM_k)
   \right]
\]
and
\[
   \left[
   f(\probU_1),
   f(\probU_1 \probU_2),
   f(\probU_1 \probU_2 \probU_3),
   \cdots,
   f(\probU_1 \probU_2 \probU_3 \cdots \probU_k)
   \right]
   \]
   is bounded above by 
   \begin{equation}
   \label{eqn: Nested Corollary}
   k
C( \xi, a, \epsilon)
e^{\epsilon N} 
\ltwo{ \xi \mod p}^N
   \end{equation}
   \end{corollary}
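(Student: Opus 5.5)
The corollary will follow from \autoref{prop: secondmainargument} by applying a single deterministic map to both $k$-tuples. Total variation distance does not increase under pushforward by a measurable map. So it suffices to find a map $\Phi$ on $k$-tuples of $N\times N$ matrices with two properties. First, $\Phi$ applied to $[\probM_1\probG_1, \probG_1^{-1}\probM_2\probG_2,\dots,\probG_{k-1}^{-1}\probM_k\probG_k]$ should equal $[f(\probM_1),\dots,f(\probM_1\cdots\probM_k)]$ almost surely. Second, $\Phi$ applied to $[\probU_1,\dots,\probU_k]$ should equal $[f(\probU_1),\dots,f(\probU_1\cdots\probU_k)]$. The bound (\ref{eqn: Nested Corollary}) is then inherited directly from \autoref{prop: secondmainargument}.

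\textbf{Choice of $\Phi$.} I take
\[
\Phi(A_1,\dots,A_k)=\bigl[f(A_1),\,f(A_1A_2),\,\dots,\,f(A_1A_2\cdots A_k)\bigr].
\]
On the $\probU$-tuple this gives exactly the second distribution, so the second property holds by definition.

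\textbf{First property.} For the $\probM$-tuple, the partial products telescope: for each $j$,
\[
(\probM_1\probG_1)(\probG_1^{-1}\probM_2\probG_2)\cdots(\probG_{j-1}^{-1}\probM_j\probG_j)=\probM_1\probM_2\cdots\probM_j\,\probG_j .
\]
Since $\probG_j\in SL_N(\Z/p^a\Z)$ and $f$ is invariant under the right action of $SL_N(\Z/p^a\Z)$, we get $f(\probM_1\cdots\probM_j\probG_j)=f(\probM_1\cdots\probM_j)$ pointwise, for every realization. So $\Phi$ of the first tuple is exactly the vector of $f$-values of the partial products of the $\probM_i$, and the first property holds.

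\textbf{Conclusion.} Combining the two properties with the monotonicity of $d_{TV}$ under pushforward, namely $d_{TV}(\Phi(X),\Phi(Y))\le d_{TV}(X,Y)$, which holds because the supremum over events of the form $\Phi^{-1}(S)$ is at most the supremum over all events, yields the stated bound.

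There is no real obstacle. The only points to check are that the telescoping leaves exactly one trailing factor $\probG_j$ on the right, so that right-invariance applies, and that $N$ and $n$ denote the same dimension. The analytic content is entirely contained in \autoref{prop: secondmainargument}.
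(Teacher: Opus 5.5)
Your proposal is correct and is exactly the derivation the paper intends; the paper states this corollary without proof, directly after \autoref{prop: secondmainargument}. You push both $k$-tuples forward by the partial-product map $\Phi$, observe that the products telescope to $\probM_1\cdots\probM_j\probG_j$ so that right $SL_N$-invariance of $f$ removes $\probG_j$, and use that $d_{TV}$ does not increase under pushforward; your reading of $n$ and $N$ as the same dimension is also correct.
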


\begin{remark}
In the above corollary, we can take $f$ to be the span, the cokernel,
the determinant. The bound (\ref{eqn: Nested Corollary}) will hence give us a
quantitative universality result for the joint distribution of the
cokernels, determinants and spans.
\end{remark}

\begin{remark}
In particular, the joint distribution of the spans gives a distribution
on nested sequences of submodules of $(\Z / p^a \Z)^N$. By taking
quotients, we hence get a flag of quotients of $(\Z / p^a \Z)^N$.
Therefore, the bound (\ref{eqn: Nested Corollary}) can be used to deduce a
quantitative universality result for the distribution of flags.
\end{remark}

\begin{remark}
We note that in (\ref{eqn: Nested Corollary}, we can take $k$ increasing exponentially with $N$, and still get an exponentially decreasing bound on the total variation. For example, we can take $k=O(\ltwo{ \xi \mod p}^{-N/2})$. This implies that the total variation distance between the distribution of
   \[
   \left[
   coker(\probM_1),
   coker(\probM_1 \probM_2),
   \cdots,
   coker(\probM_1 \probM_2 \probM_3 \cdots \probM_k)
   \right]
\]
and the distribution of
\[
   \left[
   coker(\probU_1),
   coker(\probU_1 \probU_2),
   \cdots,
   coker(\probU_1 \probU_2 \probU_3 \cdots \probU_k)
   \right]
   \]
   is exponentially decreasing with $N$, even when $k$ is an exponentially increasing function of $N$.
\end{remark}




\subsection{Rectangular matrices}

All of the above results can be extended to $k$-tuples of rectangular
matrices, in a straightforward fashion.

From (\ref{eqn: Main Theorem Part 2}), we can deduce that the error
term, for $k$ matrices of dimension $\{m_i \times n_i \}_{i=1}^k$ will
have the form:

\[
C(a, \xi, \epsilon)
\sum_i
e^{\epsilon (m_i+n_i)}
p^{a(m_i-n_i)} 
\left[
\ltwo{\xi \mod  p}^{m_i} 
+
\linf{\xi \mod p}^{n_i}
\right]
\]



\subsection{Varying $a$.}
\label{sec:varyinga}
Typically, we have an i.i.d. random matrix over $\Z_p$.
We would like to reduce modulo $p^a$ to apply our universality
results.(Given a universality result modulo $p^a$ for every $a$, it is then
possible to deduce a universality result over $\Z_p$.) It might be useful to know
how the error term changes as we increase $a$. We make only the following observation.

\begin{observation}

The exponential term does not change as we increase $a$. However, one can see from \ref{eqn: cconstant bound} that the bound we can prove for the constant term grows rather quickly with $a$.

\end{observation}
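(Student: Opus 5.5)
The Observation has two parts, and I will treat them separately: first the exponential rate in \eqref{eqn: dtvsquare}, then the constant $C(a,\xi,\epsilon)$ as bounded by \eqref{eqn: cconstant bound}. Throughout, fix a $\Z_p$-valued entry distribution $\xi$, and let $\xi_a := \xi \bmod p^a$ be the entry distribution of the reduced matrix over $\Z/p^a\Z$.

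For the exponential term, note that the only $a$-dependence in the right-hand side of \eqref{eqn: dtvsquare} (and hence of \autoref{prop: mainargument}, \autoref{prop: secondmainargument} and the corollaries) other than the constant is through $\ltwo{\xi_a \bmod p}^N$. Since $(\xi \bmod p^a) \bmod p = \xi \bmod p$ for every $a \ge 1$, the distribution of $\xi_a \bmod p$ is the same for all $a$. So the base $\ltwo{\xi \bmod p}$ of the exponential, and the factor $e^{\epsilon N}$, are independent of $a$. The hypothesis on $\xi$ (not concentrated on an affine translate of a proper subring) is also checked only modulo $p$ for $\Z/p^a\Z$, so the theorems remain applicable for every $a$. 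This part is immediate.

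For the constant, I would trace the $a$-dependence in \eqref{eqn: cconstant bound}. The explicit factors $2(a+1)^2$, $p^{a^2T}$ and $\max_j (j^{aT}+1)/(1+\epsilon/2)^j$ already grow with $a$ when $T$ is fixed. An elementary maximisation gives that the last factor is roughly $\bigl(aT/(e\log(1+\epsilon/2))\bigr)^{aT}$. The main point, and the step I expect to require the most care, is that $T = T(a)$ itself must grow. Its defining condition quantifies over all characters $m/p^a$ with $p^a \nmid m$. As $a$ increases, this set includes frequencies $\theta = m/p^a$ with $|\theta|$ as small as $p^{-a}$, and $|\probE[e^{2\pi i \theta \xi}]|$ tends to $1$ as $\theta \to 0$ in $\mathbb{R}/\Z$, when $\xi$ takes integer values of bounded size.

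To show the growth is genuine, I would compute an explicit example: $\xi$ uniform on $\{0,1\}$. Then
\[
\bigl|\probE[e^{2\pi i \theta\xi}]\bigr| = |\cos(\pi\theta)| \approx 1 - \tfrac{\pi^2\theta^2}{2}.
\]
At $\theta = p^{-a}$, the requirement $|\cdot|^T \le \epsilon/(2p^a)$ forces
\[
T \gtrsim \tfrac{2}{\pi^2}\, p^{2a}\log(2p^a/\epsilon),
\]
and conversely this order of $T$ suffices. Substituting into \eqref{eqn: cconstant bound} gives a bound of order $p^{a^2 p^{2a}\log(p^a)}$ times a comparable factorial-type term. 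This is doubly exponential in $a$, which justifies ``grows rather quickly''.

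Since the Observation only asserts what the proven bound does, no lower bound on the true constant is needed. It suffices to exhibit the first part rigorously and the growth of the right-hand side of \eqref{eqn: cconstant bound} via this example.
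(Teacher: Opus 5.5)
Your proposal is correct and follows the paper's own (very brief) reasoning. The exponential factor $e^{\epsilon N}\ltwo{\xi \bmod p}^N$ depends only on $\xi \bmod p$, which is unchanged under reduction modulo $p^a$. The growth of the constant is read off from (\ref{eqn: cconstant bound}), and your point that $T$ itself depends on $a$ makes ``grows rather quickly'' more precise than the paper does: for $\xi$ uniform on $\{0,1\}$ you get $T \asymp p^{2a}\log(p^a/\epsilon)$ and hence a doubly exponential bound.

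The only overstatement is that $T(a)$ ``must'' grow. If $\xi$ is Haar on $\Z_p$, all nontrivial Fourier coefficients vanish and $T=1$ works for every $a$. Even then the explicit factors $2(a+1)^2p^{a^2T}$ still grow, so your conclusion stands.
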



\subsection{General local rings}

We can formulate analogous statements over a finite local ring in
place of $ \Z / p^a \Z $ and the results will continue to hold. We briefly summarize these results below:
\begin{itemize}
\item   Quantitative universality of (a) cokernels, (b) determinants,
and (c) spans of products of square i.i.d. random matrices.
\item   Quantitative universality for (a) cokernels, (b) determinants,
and (c) spans of partial products of $k$ independent square i.i.d.
random matrices.
\item   Analogous results when the matrices are not necessarily square.
  \end{itemize}

As in \S \ref{sec:varyinga}, we can suppose that we have a complete
local ring, and that we are considering quotients modulo $\mathfrak{m}^a$,
We can ask how the bounds change with $a$. We make the following observation.

\begin{observation}
The exponential term does not change as we increase $a$. However, one can see from \ref{eqn: kconstant bound} that the bound we can prove for the constant term grows rather quickly with $a$.
\end{observation}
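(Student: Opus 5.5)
The plan is to read off both claims directly from the bound (\ref{eqn: bound from arxivtwo}) applied to the finite local ring $R_a := R/\mathfrak{m}^a$, where $R$ is the complete local ring and $\xi$ is the $R$-valued entry distribution reduced mod $\mathfrak{m}^a$. Throughout I assume that $\xi \bmod \mathfrak{m}^a$ satisfies the non-degeneracy hypothesis of \S\ref{sec: statement of main estimate} for every $a$. I would treat the exponential factor and the constant $K(\xi,R_a,\epsilon)$ separately.

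\textbf{The exponential term.} The maximal ideal of $R_a$ is $\mathfrak{m}/\mathfrak{m}^a$, so $R_a/(\mathfrak{m}/\mathfrak{m}^a)\cong R/\mathfrak{m}$ for every $a$. Two consequences follow:
\begin{itemize}
\item the residue characteristic $p$ is the same for all $a$;
\item the random variable $(\xi \bmod \mathfrak{m}^a) \bmod (\mathfrak{m}/\mathfrak{m}^a)$ is just $\xi \bmod \mathfrak{m}$.
\end{itemize}
Hence $\ltwo{\xi \mod \mathfrak{m}}$, $\linf{\xi \mod \mathfrak{m}}$ and $p^{-m}$ are independent of $a$, and so is $e^{\epsilon(m+n)}$. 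In the square case, which is the case of the product and flag theorems above, the factor $(\#R_a)^{\max(m-n,0)}$ equals $1$. So the bracketed exponential term, and therefore the rate in the analogues of \autoref{prop: mainargument} and \autoref{prop: secondmainargument}, does not depend on $a$.

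For rectangular matrices with $m>n$ the factor $(\#R_a)^{m-n}$ does depend on $a$. I would note this caveat rather than hide it: the Observation should be read for square matrices, or with $m-n$ fixed, in which case this factor is absorbed into the constant.

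\textbf{Growth of the constant.} The bound (\ref{eqn: kconstant bound}) has three ingredients, and I would track each as $a$ grows.
\begin{itemize}
\item $\#R_a$ grows geometrically in $a$: it is $\prod_{j<a}\#(\mathfrak{m}^j/\mathfrak{m}^{j+1})$, and each factor is at least $\#(R/\mathfrak{m})$ unless the chain stabilises. Hence $\log_p \#R_a \gtrsim a$.
\item The number of additive subgroups of $R_a$ also grows with $a$.
\item For $T$: every character of $R_{a-1}$ pulls back to a character of $R_a$. So the set of Fourier coefficients of $\xi \bmod \mathfrak{m}^a$ contains those of $\xi \bmod \mathfrak{m}^{a-1}$. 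Therefore the largest non-unimodular absolute value $\rho_a$ is nondecreasing in $a$, and $|\log\rho_a|$ is nonincreasing.
\end{itemize}
Meanwhile $|\log(\epsilon/\#R_a)|$ grows linearly in $a$. So the ceiling in the definition of $T$ is $\gtrsim a$, and $T \gtrsim a\cdot\log_p\#R_a \gtrsim a^2$. The factor $(\#R_a)^T$ is then at least of order $p^{c a^3}$. The factor $\max_j (j^T+1)(1+\epsilon)^{-j}$ is of order $(T/\log(1+\epsilon))^T$, which is superexponential in $T$.

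The same computation with (\ref{eqn: cconstant bound}) reproduces the $\Z/p^a\Z$ case of \S\ref{sec:varyinga}. There the factor $p^{a^2T}$, with $T$ itself growing in $a$, is already visible.

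\textbf{Main obstacle.} The delicate point is only to justify that ``the bound grows rather quickly'', since this is a statement about the proven bound and not about the true constant. I would therefore phrase it as a lower estimate on the right-hand side of (\ref{eqn: kconstant bound}), using the monotonicity of $\rho_a$ above. I would not attempt to show that $K$ genuinely must grow.
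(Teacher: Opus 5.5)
Your proposal is correct and follows the same route as the paper, which gives no argument beyond reading both claims off (\ref{eqn: bound from arxivtwo}) and (\ref{eqn: kconstant bound}) for $R/\mathfrak{m}^a$. Your residue-field argument, the character-pullback monotonicity of the largest non-unimodular Fourier coefficient $\rho_a$, and the caveat that $(\#R_a)^{\max(m-n,0)}$ depends on $a$ for rectangular matrices are all sound elaborations of that reading. One small point remains: your lower bound on $T$ needs $\rho_{a_0}>0$ for some $a_0$, because if $\xi$ is Haar-distributed on $R$ then every non-trivial Fourier coefficient vanishes, $T=0$, and only the subgroup count in (\ref{eqn: kconstant bound}) grows. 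The informal Observation ignores this degenerate case too.
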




%
%

\bibliographystyle{alpha}
\bibliography{ThesisBibliographyPrivetPrivet}

\end{document}